\documentclass[12pt,reqno]{amsart}
\usepackage{amssymb,latexsym,amstext,amsfonts,amscd,amsmath, }
\usepackage{graphics}
\usepackage{epsfig}
\setlength{\oddsidemargin}{0in}
\setlength{\evensidemargin}{0in}
\setlength{\textwidth}{6.5in}
\setlength{\topmargin}{0in}
\setlength{\textheight}{8.5in}
\setlength{\parindent}{1pc}
\setlength{\parskip}{0in}
\setlength{\baselineskip}{.21in}
\theoremstyle{plain}
\newtheorem{theorem}{Theorem}[section]

\newtheorem*{mainA}{Theorem A}
\newtheorem*{mainB}{Theorem B}
\newtheorem{lemma}[theorem]{Lemma}
\newtheorem{defi}[theorem]{Definition}
\newtheorem{proposition}[theorem]{Proposition}
\newtheorem{corollary}[theorem]{Corollary}

\numberwithin{equation}{section}

\theoremstyle{remark}
\newtheorem{rema}[theorem]{Remark}
\newtheorem{exam}[theorem]{Example}

\renewcommand{\labelenumi}{(\roman{enumi})}
\makeatletter
\def\alphenumi{%
        \def\theenumi{\alph{enumi}}%
        \def\p@enumi{\theenumi}%
        \def\labelenumi{(\@alph\c@enumi)}}
\makeatother
\makeatletter

\makeatother

\newcommand{\C}{\mathbb{C}}
\newcommand{\R}{\mathbb{R}}

\newcommand{\N}{\mathbb{N}}



\begin{document}
\pagestyle{plain}

\title{  Nuij  type pencils  of hyperbolic polynomials }
\author{Krzysztof Kurdyka}
\address{
Laboratoire de Mathematiques (LAMA), Universit\'e de Savoie\\UMR 5127
CNRS\\ \newline  73-376 Le Bourget-du-Lac cedex FRANCE }
\email{ Krzysztof.Kurdyka@univ-savoie.fr}
\author{Laurentiu Paunescu}
\address{School of Mathematics and Statistics\\
University of Sydney, NSW 2006, Australia}
\email{laurent@maths.usyd.edu.au}
\thanks{The work was partially supported by  ANR project STAAVF (France). The first author thanks the University of Sydney for  
support.
}
\keywords{Hyperbolic polynomial, determinantal representation, symmetric Toeplitz matrix.}
\subjclass[2010]{15A15, 30C10, 47A56}
\dedicatory{}
\date{\today}
\maketitle
\begin{abstract}
Nuij's theorem states that if a polynomial $p\in \R[z]$ is  hyperbolic (i.e. has only real roots) then $p+sp'$ is also hyperbolic for any
$s\in \R$. We study other perturbations of hyperbolic  polynomials of the form  $p_a(z,s): =p(z) +\sum_{k=1}^d a_ks^kp^{(k)}(z)$.
We give  a full characterization of those   $a= (a_1, \dots, a_d) \in \R^d$  for which  $p_a(z,s)$ is a pencil of hyperbolic polynomials.
We give also a  full characterization of those   $a= (a_1, \dots, a_d) \in \R^d$ for which the associated families   $p_a(z,s)$
admit universal determinantal representations. In fact we show that all these sequences come from special symmetric Toeplitz matrices.
\end{abstract}

\maketitle
\section{Introduction}
Hyperbolic polynomials and the problem of their determinantal representations is  a very active area of  real algebraic geometry. A nice survey of Vinnikov \cite{V} is a  good source on recent developments in this subject. 
The  goal of this paper is a study of  $1$-parameter   families of hyperbolic polynomials and their universal determinantal representations.
  Recall that a polynomial $p\in \R[z]$ is called  {\it  hyperbolic} if all its roots are  real. Clearly  any monic hyperbolic  polynomial  of degree $d$ is  a  characteristic  polynomial of  a symmetric  $d\times d$ matrix.
  First we recall the following theorem  proved by W.
  Nuij \cite{nuij}.

 \begin{theorem}\label{NuijTheorem} Let $p\in \R[z]$ be a hyperbolic polynomial, then 
$$
p+sp^\prime
$$
is hyperbolic  for any $s\in \R$.
\end{theorem}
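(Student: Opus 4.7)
The plan is as follows. The case $s=0$ is trivial, so assume $s \neq 0$; substituting $z \mapsto -z$ interchanges $p+sp'$ and $p(-z) - sp'(-z)$ (up to sign of the argument), so without loss of generality $s > 0$. Since hyperbolic polynomials of a fixed degree form a closed subset of the coefficient space, since those with simple real roots are dense among them, and since the map $p \mapsto p + sp'$ is continuous in the coefficients of $p$, I may further reduce to the case where $p$ has $d := \deg p$ distinct real roots $r_1 < r_2 < \cdots < r_d$.

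The heart of the argument is the auxiliary function
\[
g(z) := e^{z/s}\, p(z).
\]
Since $e^{z/s}$ never vanishes, $g$ has the same real zeros as $p$. A direct computation gives
\[
g'(z) = \frac{e^{z/s}}{s}\bigl(p(z) + s\, p'(z)\bigr),
\]
so the real zeros of $p + sp'$ coincide with the real zeros of $g'$. Applying Rolle's theorem to $g$ on each interval $[r_i, r_{i+1}]$ produces $d-1$ real zeros of $g'$. Because $p + sp'$ has the same leading term as $p$, it is still a polynomial of degree exactly $d$, so to conclude hyperbolicity I must still produce one additional real zero.

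The extra zero comes from the asymptotic behavior of $g$ at $-\infty$: as $s>0$, the factor $e^{z/s}$ decays faster than any polynomial grows, so $g(z) \to 0$ as $z \to -\infty$. Together with $g(r_1) = 0$ and the fact that $g$ is nonzero on the open interval $(-\infty, r_1)$, this forces $g$ to attain a nonzero extremum there, providing the missing critical point and hence the $d$th real zero of $p + sp'$. The main subtlety is precisely this boundary step: a direct application of Rolle to $p$ itself would yield only $d-1$ real zeros of the derivative-type expression, and it is the asymptotic vanishing of the exponential weight at one end of the real line that supplies the final root needed to match the degree.
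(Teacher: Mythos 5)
Your proof is correct, but it takes a genuinely different route from the paper. You give the classical elementary argument: reduce to $s>0$ and, by density and closedness of the set of hyperbolic polynomials, to simple roots, then study $g(z)=e^{z/s}p(z)$, whose critical points are exactly the real zeros of $p+sp'$; Rolle on the $d-1$ gaps between consecutive roots plus the decay of $g$ at $-\infty$ (which forces a nonzero extremum of $g$, hence a critical point, on $(-\infty,r_1)$) yields $d$ distinct real zeros of the degree-$d$ polynomial $p+sp'$. The only step stated a bit tersely is this last one: to be airtight you should fix $z_0\in(-\infty,r_1)$, use $g\to 0$ at $-\infty$ to confine the maximizer of $|g|$ to a compact interval $[M,r_1]$ with $|g|<|g(z_0)|$ on $(-\infty,M]$, and observe the maximizer is interior since $g(r_1)=0$; this is routine and the idea is exactly right. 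The paper instead obtains Nuij's theorem as a corollary of a universal determinantal representation: for the sequence $a=(1,0,\dots,0)$ the associated matrix is the all-ones special Toeplitz matrix $T_{1,1}$, so $p(z)+sp'(z)=\det(zI+D+sT_{1,1})$ with $D$ diagonal carrying the roots of $p$, and hyperbolicity follows because this is the characteristic polynomial of a real symmetric matrix. Your approach is self-contained and more elementary (essentially Nuij's original style), requiring no matrix theory; the paper's approach is heavier but buys more, since it embeds the statement in the general framework of Nuij sequences, the Borcea--Br\"and\'en criterion (Theorem A), and the classification of sequences with universal determinantal representations via special Toeplitz matrices (Theorem B), of which Theorem \ref{NuijTheorem} is the simplest instance.
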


We give below a proof of this result, based on the  existence of  determinantal  representation of the family 
of the polynomials $p+sp^\prime, \ s \in \R$. In fact we  state and prove a generalization of Nuij's result. To this end we propose 
the following definition.

\begin{defi}\label{NuijseqDef}  
We say that  $ a= (a_1, \dots, a_d) \in \R^d$ is a  Nuij  sequence if for
any  hyperbolic  polynomial $p$ of degree $d$,   the polynomial
\begin{equation}\label{nuijseqdefeq}
p_a(z,s):  = p(z) +\sum_{k=1}^d a_ks^kp^{(k)}(z) \in \R[z],
\end{equation}
is hyperbolic for any $s\in \R$.  We denote by ${\mathcal N}_d$ the set of all Nuij sequences in $\R^d$.
\end{defi}
 Note that by Theorem \ref{NuijTheorem},   $a=(1,0, \dots , 0)$ is   a Nuij sequence for any $d\in  \N$, $d\ge 1$.
On the other hand, repeated application of Theorem \ref{NuijTheorem},  also produces Nuij sequences;  for instance we have 

$$ p +sp^\prime + s(p +sp^\prime)^\prime = p +2sp^\prime +s^2p^{\prime \prime}.
$$
Hence  $(2,1, 0, \dots, 0)$ is a Nuij sequence for any $d\in  \N$, $d\ge 2$. In Section \ref{detrep}  we shall see however that there is an
 essential difference between those two families, with respect to their  determinantal representations.

Surprisingly the set  ${\mathcal N}_d$ has a nice explicit description.
 
 \begin{mainA}\label{NuijsequenceTheorem} 
  A sequence $ a= (a_1, \dots, a_d) \in \R^d$ is a  Nuij  sequence if and only if   the polynomial
\begin{equation}\label{nuijseqdefeqQ}
q_a(z):  = z^d +\sum_{k=1}^d a_k(z^d)^{(k)} =  z^d +\sum_{k=1}^d a_k \frac {d!}{(d-k)!} z^{d-k}
\end{equation}
is hyperbolic.
 
 \end{mainA}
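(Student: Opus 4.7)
The forward implication is immediate: specializing the defining condition of a Nuij sequence to the hyperbolic polynomial $p(z) = z^d$ and $s = 1$ gives $p_a(z, 1) = q_a(z)$, forcing $q_a$ to be hyperbolic.

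For the converse, I would first reduce the problem via a scaling argument. Given $s \neq 0$ and hyperbolic $p$ of degree $d$, the polynomial $\tilde p(w) := s^{-d} p(sw)$ is again monic and hyperbolic of degree $d$, and using $p^{(k)}(z) = s^{d-k} \tilde p^{(k)}(z/s)$ one gets
\[
p_a(z, s) \;=\; s^d\, \bigl(P(D)\, \tilde p\bigr)(z/s), \qquad P(D) := 1 + \sum_{k=1}^d a_k D^k,
\]
with $D = d/dz$. Hence $a$ is a Nuij sequence if and only if the linear differential operator $P(D)$ carries every monic hyperbolic polynomial of degree $d$ to a hyperbolic polynomial. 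The direct computation $D^k(z+t)^d = \tfrac{d!}{(d-k)!}(z+t)^{d-k}$ then yields the identity
\[
P(D)(z+t)^d \;=\; q_a(z+t), \qquad t \in \R,
\]
so the hypothesis that $q_a$ is hyperbolic already delivers hyperbolicity preservation on the one-parameter family $\{(z+t)^d : t \in \R\}$ of polynomials with a single $d$-fold real root.

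The crux, and the main obstacle, is upgrading this from $(z+t)^d$ to arbitrary hyperbolic polynomials of degree $d$. The plan, motivated by the Toeplitz/determinantal structure advertised in the abstract, is to associate to $a$ a real symmetric $d \times d$ Toeplitz matrix $T_a$ whose characteristic polynomial is $q_a$, and then, for any hyperbolic $p(z) = \det(zI - M_p)$ with $M_p$ real symmetric, to establish a pencil identity of the form
\[
p_a(z, s) \;=\; \det(zI - M_p - s\, T_a).
\]
Once this identity holds, $p_a(\cdot, s)$ is the characteristic polynomial of the real symmetric matrix $M_p + s T_a$ and therefore has only real roots for every $s \in \R$. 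The $d = 2$ case illustrates the mechanism transparently: take
\[
T_a \;=\; \begin{pmatrix} -a_1 & t \\ t & -a_1 \end{pmatrix}, \qquad t^2 = a_1^2 - 2 a_2,
\]
whose existence over $\R$ is exactly the hyperbolicity condition on $q_a(z) = z^2 + 2 a_1 z + 2 a_2$. The real technical work is producing the Toeplitz matrix $T_a$ in higher degree and verifying the pencil identity: matching the coefficient of $s^k$ in the expansion of $\det(zI - M_p - sT_a)$ with $a_k p^{(k)}(z)$ forces a tightly coupled system of polynomial equations on the Toeplitz entries of $T_a$, and it is precisely hyperbolicity of $q_a$ that should guarantee solvability of this system in the reals. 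This is the step I expect to absorb the bulk of the effort.
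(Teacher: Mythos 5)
Your forward direction and the scaling reduction to $s=1$ both match the paper. The problem is the converse, where your entire strategy rests on producing, from hyperbolicity of $q_a$ alone, a single real symmetric (Toeplitz) matrix $T_a$ with the universal pencil identity $p_a(z,s)=\det(zI-M_p-sT_a)$ valid for \emph{every} hyperbolic $p$ of degree $d$. That step is not merely "the bulk of the effort" --- it is impossible for $d\ge 3$. The paper's Theorem B shows that a Nuij sequence admits such a universal determinantal representation if and only if $a_i=\frac{1}{i!}t_{\alpha,\beta}(i)$ for some $\alpha,\beta\in\R$, a two-parameter subfamily, whereas $\mathcal N_d=b_d^{-1}(\mathcal H_1^d)$ is full-dimensional. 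Concretely, $a=(2,1,0,\dots,0)$ has $q_a$ hyperbolic (for $d=3$, $q_a(z)=z^3+6z^2+6z$ has roots $0,-3\pm\sqrt3$), so it is a Nuij sequence, yet the paper's closing Remark shows no $\alpha,\beta$ can produce it, i.e.\ no associated matrix $A_a$ exists. So the "tightly coupled system" you expect hyperbolicity of $q_a$ to solve is genuinely unsolvable for such $a$; your method proves the theorem only on the thin Toeplitz family (your $d=2$ computation works precisely because $\mathcal{UN}_2=\mathcal N_2$, a coincidence of low degree). Allowing the matrix to depend on $p$ does not rescue the plan either: that is the Helton--Vinnikov theorem, which presupposes the hyperbolicity of the pencil you are trying to establish.

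The paper takes a different route at exactly this point. It packages $p\mapsto p+\sum_k a_kp^{(k)}$ as a linear operator $T_a$ on $\C_d[z]$ and invokes Borcea--Br\"and\'en's characterization of stability-preserving operators: $T_a$ preserves stability iff $T_a((z+w)^d)$ is a stable polynomial in two variables. The key computation is $T_a((z+w)^d)=q_a(z+w)$, which is the two-variable, complex upper half-plane version of your identity $P(D)(z+t)^d=q_a(z+t)$; hyperbolicity of $q_a$ makes $q_a(z+w)$ stable, so $T_a$ preserves hyperbolicity on all of $\R_d[z]$, and the scaling argument finishes. In other words, you found the right test family $(z+w)^d$ but lack the tool (Borcea--Br\"and\'en, Theorem 4 in their paper) that upgrades behaviour on that single family to behaviour on all hyperbolic polynomials; the determinantal substitute you propose cannot fill that role.
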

 
 In other words, the theorem  states that to check that a given $ a= (a_1, \dots, a_d) \in \R^d$ is  a Nuij sequence it is enough to check 
 hyperbolicity  of $p_a (z,s)$ only for $p(z)= z^d$. The proof is given in Section \ref{hyp1}; it is  based on a deep result of Borcea and Br\"and\'en
 \cite{BB} which gives a characterization of linear maps (on the  space of polynomials)  preserving hyperbolic polynomials.
 
 The second part, developed in Section \ref{detrep}, concerns universal determinantal  representation of some Nuij sequences. 
 \begin{defi}\label{UNuijseqDef} \rm 
We say that  $ a= (a_1, \dots, a_d) \in {\mathcal N}_d\subset \R^d$
 admits a {\it universal   determinantal representation}
if  there exists   a symmetric matrix  $A_a$ such that for
any  hyperbolic  polynomial $p$ of degree $d$ we have
\begin{equation}\label{nuijseqdefeqD}
p_a(z,s) =\det (zI+D+sA_a),
\end{equation}
where $D$ is  a diagonal matrix whose  characteristic   polynomial  is equal  to $p=p_a(z,0)$. 
The matrix $A_a$ will be referred    as a {\it matrix  associated to  the sequence} $ a= (a_1, \dots, a_d)$.
We denote by $\mathcal {UN}_d$ the set of all Nuij sequences in $\R^d$ which admit   universal   determinantal representations.
\end{defi}

Recall that  a square matrix is  {\it Toeplitz } if all  parallels to the principal diagonal are constant.
We say that a  symmetric Toeplitz matrix is {\it  special} if all entries outside the principal diagonal  are equal to some $\beta \in \R$, and of course
all entries on the principal diagonal are equal to some $\alpha \in\R$. 
In the sequel we will denote such a $d\times d$ matrix by $T_{\alpha,\beta}(d)$, and its determinant by  $t_{\alpha,\beta}(d):=\det T_{\alpha,\beta}(d)= (\alpha -\beta)^{d-1} (\alpha+(d-1)\beta)$. 
We obtain the following characterization of  all Nuij sequences
which admit  universal   determinantal representations.

\begin{mainB}\label{Nuijseqthm)}
A sequence 
 $ a= (a_1, \dots, a_d) \in \R^d$ is  a 
 Nuij sequence with a  universal   determinantal representation if and only if 
 there exit $\alpha,\beta \in \R$ such that
 \begin{equation}\label{Nuijseqeq}
a_i = \frac{1}{i!}t_{\alpha,\beta}(i), \, i=1, \dots, d.
\end{equation}
\end{mainB}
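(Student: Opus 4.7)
The plan is to prove the two implications of Theorem B separately, one by direct computation and one by extracting Toeplitz structure from ``constancy of principal minors''.

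For sufficiency, given $\alpha,\beta\in\R$ I would take $A_a:=T_{\alpha,\beta}(d)$ and decompose it as $(\alpha-\beta)I+\beta\,\mathbf{1}\mathbf{1}^{T}$. Then $zI+D+sA_a$ is the diagonal matrix $zI+D+s(\alpha-\beta)I$ (with entries $z+\lambda_i+s(\alpha-\beta)$) perturbed by the rank-one update $s\beta\,\mathbf{1}\mathbf{1}^{T}$, so the matrix determinant lemma yields
\begin{equation*}
\det\bigl(zI+D+sT_{\alpha,\beta}(d)\bigr)=p\bigl(z+s(\alpha-\beta)\bigr)+s\beta\,p'\bigl(z+s(\alpha-\beta)\bigr).
\end{equation*}
Taylor-expanding both terms around $z$ and collecting powers of $s$, the coefficient of $s^{k}p^{(k)}(z)$ is $\tfrac{(\alpha-\beta)^{k}}{k!}+\tfrac{\beta(\alpha-\beta)^{k-1}}{(k-1)!}=\tfrac{(\alpha-\beta)^{k-1}(\alpha+(k-1)\beta)}{k!}=\tfrac{t_{\alpha,\beta}(k)}{k!}$. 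This is the required universal determinantal representation (\ref{nuijseqdefeqD}), and hyperbolicity of $p_a(z,s)$ is automatic since $D+sT_{\alpha,\beta}(d)$ is symmetric.

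For necessity, let $A:=A_a$ be the symmetric universal matrix. I would factor
\begin{equation*}
\det(zI+D+sA)=p(z)\det\bigl(I+s(zI+D)^{-1}A\bigr)=\sum_{k=0}^{d}s^{k}\sum_{|I|=k}\det A[I;I]\prod_{j\notin I}(z+\lambda_j),
\end{equation*}
expanding the elementary symmetric functions of the spectrum of $(zI+D)^{-1}A$ as sums of principal minors and using the diagonality of $(zI+D)^{-1}$ to get $\det M[I;I]=\det A[I;I]/\prod_{i\in I}(z+\lambda_i)$. Comparing with $p_a(z,s)=\sum_{k}a_k s^k\cdot k!\sum_{|I|=k}\prod_{j\notin I}(z+\lambda_j)$ and treating the quantities $u_j:=z+\lambda_j$ as independent (which they are: fix $z$ and vary $\lambda_j$), linear independence of the squarefree monomials $\prod_{j\notin I}u_j$ forces the rigid identity $\det A[I;I]=k!\,a_k$ for every $I$ of size $k$. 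Thus every $k\times k$ principal minor of $A$ takes the same value.

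From this constant principal-minor property the Toeplitz shape falls out in three steps. The $k=1$ equation gives $A_{ii}\equiv\alpha:=a_{1}$; the $k=2$ equation gives $A_{ij}^{2}\equiv\alpha^{2}-2a_{2}=:\beta^{2}$ for all $i\ne j$; the $k=3$ equation, after subtracting the already-known diagonal and squared off-diagonal contributions to a $3\times 3$ principal minor, forces the triangle product $A_{ij}A_{jk}A_{ki}$ to be independent of the triple. Writing $A_{ij}=\beta\,\epsilon_{ij}$ with $\epsilon_{ij}\in\{\pm 1\}$, this says every triangle sign-product $\epsilon_{ij}\epsilon_{jk}\epsilon_{ki}$ equals a common $\epsilon\in\{\pm 1\}$. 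A short $\mathbb{Z}/2$-cocycle argument (set $\eta_{1}=1$ and $\eta_{i}=\epsilon_{1i}$ for $i\ge 2$, then apply the triangle identity to $\{1,i,j\}$) produces $\epsilon_{ij}=\epsilon\,\eta_{i}\eta_{j}$, hence $EAE=T_{\alpha,\epsilon\beta}(d)$ with $E=\operatorname{diag}(\eta_{1},\dots,\eta_{d})$. Since $E$ is diagonal, $EDE=D$ and therefore $\det(zI+D+sA)=\det(zI+D+sEAE)$, so $A$ may be replaced by the genuine special Toeplitz matrix $T_{\alpha,\epsilon\beta}(d)$ in the universal representation; the sufficiency computation then identifies $a_{k}=t_{\alpha,\epsilon\beta}(k)/k!$.

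The main obstacle will be the $k=3$ sign-cocycle step: the $k=1,2$ principal-minor identities only fix the absolute values of the entries of $A$, and one must see that constancy of the triangle products forces the off-diagonal sign pattern to be a $\mathbb{Z}/2$-coboundary (up to a global sign) so that a diagonal sign-conjugation normalises $A$ to an honest special Toeplitz matrix. The $k\ge 4$ equations then impose no further constraint, as $T_{\alpha,\beta}(d)$ automatically has all its $k\times k$ principal minors equal to the common value $t_{\alpha,\beta}(k)$.
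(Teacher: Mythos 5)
Your necessity argument follows the same route as the paper's proof (Proposition~\ref{DetNuijToeplProp}): the universal identity forces every $k\times k$ principal minor of $A_a$ to equal $k!\,a_k$ (the paper gets this by comparing coefficients of the squarefree monomials in $w_i=z+\lambda_i$, which is exactly your independence argument for the $u_j$), then the $k=1,2,3$ minors give the constant diagonal $\alpha$, the constant $\beta^2$, and the constancy of the triangle products, after which a diagonal sign normalisation replaces $A_a$ by a special Toeplitz matrix. Your sufficiency computation is a genuinely nicer ingredient than what the paper offers: writing $T_{\alpha,\beta}(d)=(\alpha-\beta)I+\beta\,\mathbf{1}\mathbf{1}^{T}$ and applying the matrix determinant lemma yields the closed form $p(z+s(\alpha-\beta))+s\beta\,p'(z+s(\alpha-\beta))$, which makes the identity $a_k=t_{\alpha,\beta}(k)/k!$ transparent (the paper merely asserts the determinantal identity, implicitly via the principal-minor expansion), and it reproves Nuij's theorem on the spot.

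One step is wrong as written, though easily repaired. With $\eta_1=1$ and $\eta_i=\epsilon_{1i}$, the triangle identity gives $\epsilon_{ij}=\epsilon\,\eta_i\eta_j$ only for $i,j\ge 2$; for the first row one has $\epsilon_{1j}=\eta_1\eta_j$ with no factor $\epsilon$. Hence when $\epsilon=-1$ the conjugate $EAE$ has off-diagonal entries $\beta$ in the first row and column but $-\beta$ elsewhere, which is not a special Toeplitz matrix, so the asserted $EAE=T_{\alpha,\epsilon\beta}(d)$ fails. The fix is one character: take $\eta_i=\epsilon\,\epsilon_{1i}$ for $i\ge 2$ (equivalently, replace $\beta$ by $-\beta$ when $\epsilon=-1$, so all triangle products become $+1$ and your coboundary recipe applies verbatim); this is precisely what the paper's extra step of multiplying the first row and column of $C_a$ by $\xi$ accomplishes. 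You should also record the trivial case $\beta=0$, where $A_a=\alpha I=T_{\alpha,0}$ already, as the paper does. With these two touches the proposal is complete and equivalent to the paper's proof.
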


%


\section{Hyperbolic polynomials and Nuij sequences }\label{hyp1}

First we recall some facts about the space $\mathcal H_1^d$  of  hyperbolic (monic) polynomials of  some fixed degree $d$.
For  $x =(x_1, \dots, x_d)\in \R^d$  we have the $k$-th elementary symmetric polynomial
\begin{equation}\label{cwzor3}
c_k(x) = \sum_{i_1<\cdots < i_k}  x_{i_1}\cdots x_{i_k},
\end{equation}
for $k=1,\dots, d$.
We will identify any  $b =(b_1, \dots, b_d) \in \R^d$ with a  monic polynomial $h_b: = z^d +\sum_{k=1}^d b_kz^{d-k}$.
Thus  we can write $\mathcal H_1^d =c(\R^n),$ where  $c =(c_1, \dots, c_d):\R^d \to \R^d$ is the Vi\`ete map, hence by  the Tarski-Seidenberg theorem
it follows that $\mathcal H_1^d$ is semialgebraic.  
Moreover, the Vi\`ete map $c =(c_1, \dots, c_d):\R^d \to \R^d$ is generically a submersion, hence $\mathcal H_1^d =c(\R^n)$
has  nonempty 
interior.  In fact  $\mathcal H_1^d$ is a basic semialgebraic set which can be described using 
generalized discriminants or Bezoutians (see a nice exposition in  \cite{procesi} or even a more detailed in
\cite{rainer}). Recent developments on hyperbolic univariate polynomials are  given by Kostov in his survey  
\cite{kostov}.

For the proof of Theorem A  we need to recall several definitions and results from
\cite{BB}.

\begin{defi}\label{Def1} \cite[Definition 1]{BB} \rm
We say that a polynomial  $ f(z_1, \dots, z_n) \in \C[z_1, \dots, z_n]$ is  {\it stable} if 
$f(z_1, \dots, z_n)\ne 0$ for all $n$-tuples  $(z_1, \dots, z_n) \in \C^n$ with $im (z_j) >0,$ for
$j=1, \dots, n$. If in addition $f$ has real coefficients, it will be referred to as {\it real stable}.
The set of stable and real stable  polynomials in $n$ variables will be  denoted by  $\mathcal H_n (\C)$ and $\mathcal H_n (\R)$
respectively. Note that  for $n=1$  a polynomial $f$ is real stable, precisely means  that $f$ is hyperbolic.
\end{defi}

Let $T:\C_d[z] \to \C_d[z]$ be a linear map, where $\C_d[z] $ stands for the vector space (over $\C$) of complex polynomials of degree at most $d$. We extend it to a linear map  $T:\C_d[z,w] \to \C_d[z,w]$, by setting
$T(z^kw^l) : = T(z^k)w^l$ for all $k=1,\dots, d$ and $l\in \N$. We now state  the result which is crucial for the proof
of Theorem \ref{NuijsequenceTheorem}.
\begin{theorem}\label{BB4}\cite[Theorem 4]{BB}
Let $T:\C_d[z] \to \C_d[z]$ be a linear map. Then $T$ preserves stability if an only if either
 \begin{enumerate}
 \item  $T$ has range of dimension at most one and is of the form
 $$ T(f) = \alpha (f) P,$$
 where $\alpha : \C_d[z] \to \C$ is a linear functional and $P\in \mathcal H_1 (\C)$; or
 \item $T((z+w)^d) \in  \mathcal H_2 (\C)$.
\end{enumerate}
\end{theorem}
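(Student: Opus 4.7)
The plan is to characterize stability preservers via their \emph{algebraic symbol}. To each linear map $T:\C_d[z]\to\C_d[z]$ associate the bivariate polynomial
$$G_T(z,w) := T\bigl[(z+w)^d\bigr] \in \C[z,w],$$
where $T$ acts on the $z$-variable. Since $(z+w)^d = \sum_{k=0}^d \binom{d}{k}z^k w^{d-k}$, the coefficient of $w^{d-k}$ in $G_T$ is $\binom{d}{k}T[z^k]$, so $G_T$ and $T$ determine each other. The theorem becomes the dichotomy: $T$ preserves stability if and only if either $G_T \in \mathcal{H}_2(\C)$, or $G_T$ degenerates to the product form $\alpha((z+w)^d)\cdot P(z)$ with $P \in \mathcal{H}_1(\C)$.

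For necessity, fix $w_0$ with $\operatorname{Im} w_0 > 0$. The polynomial $z\mapsto (z+w_0)^d$ is stable in $z$ (its only root $-w_0$ lies in the lower half-plane), so $G_T(z,w_0) = T[(z+w_0)^d]$ is stable in $z$ or identically zero. Viewing $w_0 \mapsto G_T(\,\cdot\,, w_0)$ as a holomorphic family on the upper half-plane and invoking Hurwitz's theorem on zero-free limits, I obtain the dichotomy: either $G_T$ is zero-free on $\{\operatorname{Im} z > 0,\,\operatorname{Im} w > 0\}$ — which is case (2) — or some slice $G_T(\,\cdot\,, w_0)$ vanishes identically, from which linearity of $T$ forces $T$ to factor through a rank-one quotient. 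In that degenerate branch, writing $T[f] = \alpha(f)P$ and applying $T$ to any stable $f$ with $\alpha(f)\ne 0$ forces $P \in \mathcal{H}_1(\C)$, giving case (1).

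Sufficiency is the substantive direction. Case (1) is immediate: $T[f] = \alpha(f)P$ is always a scalar multiple of the stable $P$. For case (2) one must show that joint stability of $G_T$ implies stability of $T[f]$ for every stable $f$. I would establish a differential-operator identity expressing $T[f](z)$ as a contraction of $G_T$ against $f$ — specifically, as a normalized application of the reversed polynomial $\tilde f(w) := w^d f(1/w)$, treated as a differential operator in $w$ and applied to $G_T(z,w)$ at $w=0$. To analyze the resulting action on stability I would invoke the Grace--Walsh--Szeg\H{o} coincidence theorem, which identifies stable $f \in \C_d[z]$ with the jointly stable symmetric multiaffine polarization $F(u_1,\dots,u_d)$ satisfying $F(z,\dots,z)=f(z)$, together with the Lieb--Sokal lemma, which asserts that inside a jointly stable polynomial of degree $\le 1$ in an auxiliary variable $w_k$, the substitution $w_k \mapsto -\partial_{u_k}$ followed by evaluation at $u_k=0$ preserves stability in the remaining variables. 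Iterating Lieb--Sokal $d$ times, once per polarization variable, reduces the stability of $T[f]$ to the joint stability of $G_T$.

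The principal obstacle is this iteration step: at each application of Lieb--Sokal one must ensure that the intermediate polynomial remains jointly stable in all \emph{remaining} polarization variables together with $z$, not merely stable in $z$ alone. This requires careful bookkeeping of the imaginary parts of the polarization evaluation points and the propagation of joint stability through each substitution. Once this induction is carried through, cases (1) and (2) together yield the claimed characterization of stability-preserving linear maps on $\C_d[z]$.
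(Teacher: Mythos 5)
This theorem is not proved in the paper at all: it is imported verbatim as \cite[Theorem 4]{BB} (Borcea--Br\"and\'en), so there is no internal proof to compare against. Your proposal is a reconstruction of the Borcea--Br\"and\'en argument, and you have correctly identified the right machinery --- the symbol $G_T(z,w)=T[(z+w)^d]$, the Grace--Walsh--Szeg\H{o} polarization, and the Lieb--Sokal lemma. But as a proof it has two genuine gaps, one in each direction.

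In the necessity direction, the step ``some slice $G_T(\,\cdot\,,w_0)$ vanishes identically, from which linearity of $T$ forces $T$ to factor through a rank-one quotient'' is a non sequitur. The vanishing of one slice says only that $\sum_{k=0}^d \binom{d}{k} w_0^{d-k}\,T[z^k]=0$, i.e.\ a \emph{single} linear relation among the $d+1$ polynomials $T[1],T[z],\dots,T[z^d]$; this bounds the rank of $T$ by $d$, not by $1$. To reach the rank-one alternative one must exploit the stability-preservation hypothesis on many more test polynomials than the single family $(z+w_0)^d$ (this is where the real work in \cite{BB} lies), and your sketch supplies no such argument. In the sufficiency direction, you yourself flag the Lieb--Sokal induction as ``the principal obstacle'' and do not carry it out; since that induction \emph{is} the substantive content of the implication $(2)\Rightarrow$ stability preservation, the hard half of the theorem remains unproved. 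A smaller point: the contraction identity is not literally ``apply $\tilde f(-\partial_w)$ to $G_T$ and evaluate at $w=0$''; since $T[z^k]=\frac{k!}{d!}\,\partial_w^{d-k}G_T\big|_{w=0}$, factorial normalizations intervene, and the clean statement requires either the polarized (multiaffine) form of $f$ or a correspondingly renormalized symbol. None of this affects the paper, which uses the theorem as a black box, but the proposal as written is a plan rather than a proof.
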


{\it Proof of Theorem A.}

  Assume  $ a= (a_1, \dots, a_d) \in \R^d$ is a  Nuij   sequence.  Hence by  Definition  \ref{NuijseqDef}  applied to 
  $p(z)=z^d$ with $s=1$
we obtain that the polynomial $p_a$ defined by  \eqref{nuijseqdefeq} is hyperbolic.

To prove the converse let us fix   some $ a= (a_1, \dots, a_d) \in \R^d$ and assume that
the polynomial $q_a$ defined by  \eqref{nuijseqdefeqQ} is hyperbolic. 
We associate to the sequence $ a= (a_1, \dots, a_d)$ a linear operator $T_a:\C_d[z] \to \C_d[z]$ defined by
\begin{equation}\label{leq}
T_a (p)(z):  = p(z) +\sum_{k=1}^d a_kp^{(k)}(z) \in \R[z]
\end{equation}

\begin{lemma}\label{mainlemma}
$
T_a((z+w)^d) = q_a(z+w).
$
\end{lemma}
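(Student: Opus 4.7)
My plan is to prove the identity by a direct computation, recognizing that the extension $T_a(z^k w^l) := T_a(z^k) w^l$ amounts to letting $T_a$ act in the $z$-variable while treating $w$ as an inert parameter; equivalently, the derivatives in the definition \eqref{leq} are interpreted as partial derivatives $\partial/\partial z$ when applied to $(z+w)^d$.

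First I would set $p(z,w) := (z+w)^d$, view it as a polynomial in $z$ of degree $d$ for each fixed $w$, and compute its successive $z$-derivatives:
$$\frac{\partial^k p}{\partial z^k} = \frac{d!}{(d-k)!}(z+w)^{d-k}, \qquad 0 \le k \le d.$$
Substituting into \eqref{leq} then yields
$$T_a\bigl((z+w)^d\bigr) = (z+w)^d + \sum_{k=1}^d a_k \frac{d!}{(d-k)!}(z+w)^{d-k}.$$
On the other hand, plugging $z+w$ into the explicit formula \eqref{nuijseqdefeqQ} gives
$$q_a(z+w) = (z+w)^d + \sum_{k=1}^d a_k \frac{d!}{(d-k)!}(z+w)^{d-k},$$
and the two expressions match term by term.

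There is no real obstacle here; the content of the lemma is essentially tautological once the right viewpoint is adopted. The only point worth making explicit is the compatibility of the convention $T_a(z^k w^l) := T_a(z^k) w^l$ with partial differentiation in $z$: this follows from linearity of $T_a$ together with the binomial expansion $(z+w)^d = \sum_k \binom{d}{k} z^k w^{d-k}$ and the elementary identity $(d/dz)^j z^m = \frac{m!}{(m-j)!} z^{m-j}$ (with the convention that this vanishes for $j>m$). Thus the moral of the lemma is simply that $T_a$ commutes with the substitution $z \mapsto z+w$ when applied to the top monomial $z^d$, which is exactly what the definition of $q_a$ encodes.
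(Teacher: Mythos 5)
Your proof is correct, and it takes a genuinely cleaner route than the paper's. The paper proves the lemma by brute force: it expands $(z+w)^d$ binomially, applies the monomial convention $T_a(z^i w^{d-i})=T_a(z^i)\,w^{d-i}$ together with $T_a(z^i)=\sum_{j} a_j \frac{i!}{(i-j)!}z^{i-j}$, and then checks, for each fixed $j$, that the coefficient of $a_j$ on both sides is the same double sum $\sum_k \frac{d!}{(d-k-j)!\,k!}z^k w^{d-j-k}$. You instead identify the extended operator with the constant-coefficient differential operator $\mathrm{id}+\sum_k a_k\,\partial_z^k$ acting in the $z$-variable with $w$ inert --- a fact verified on monomials $z^k w^l$ (where both sides give $\bigl(z^k+\sum_j a_j (z^k)^{(j)}\bigr)w^l$) and extended by linearity --- and then use that such an operator is insensitive to the translation $z\mapsto z+w$, so $T_a\bigl((z+w)^d\bigr)=(z+w)^d+\sum_k a_k\frac{d!}{(d-k)!}(z+w)^{d-k}=q_a(z+w)$ with no coefficient bookkeeping at all. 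What your approach buys is brevity and a conceptual reason the lemma is true (translation invariance of constant-coefficient operators in $z$); what the paper's computation buys is that it never leaves the purely formal monomial definition of the extension. The only step you must not skip --- and you do state it --- is the one-line check that the paper's monomial convention agrees with partial differentiation in $z$, since the convention is the definition and the differential-operator reading is a claim about it; once that is said, the rest is exactly as you write.
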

\begin{proof} We expand first the right-hand side of  \eqref{leq}
$$
T_a((z+w)^d) =T\left(\sum_{i=0}^d  \binom{d}{i}z^i w^{d-i}\right)
= \\ \sum_{i=0}^d  \binom{d}{i}w^{d-i}T(z^i).
$$
Note that 
$$
T_a(z^i) =  \sum_{j=0}^i a_j(z^i)^{(j)}= \sum_{j=0}^i a_j(z^i)^{(j)}=
\sum_{j=0}^i a_j \frac{i!}{(i-j)!}z^{i-j}
$$
So,

$$
\sum_{i=0}^d  \binom{d}{i}w^{d-i}T(z^i) \newline =
\sum_{i=0}^d  \binom{d}{i}w^{d-i}\left( \sum_{j=0}^i a_j \frac{i!}{(i-j)!}z^{i-j} \right),
$$
hence
\begin{equation}\label{leq1}
 T_a((z+w)^d)=
\sum_{i=0}^d \frac{d!}{(d-i)!i!}z^{i-j} w^{d-i}\left( \sum_{j=0}^i a_j \frac{i!}{(i-j)!}z^{i-j} \right).
\end{equation}

On the other hand 
\begin{equation}\label{leq2}
q_a(z+w) = \sum_{i=0}^d \frac{d!}{(d-i)!} a_i(z+w)^{d-i}.
\end{equation}

$\bullet$ The coefficient in \eqref{leq1} which comes with $a_j$, $j=0,1,\dots,d$ is equal to

$$\sum_{i=j}^d \frac{d!}{(d-i)!i!}\frac{i!}{(i-j)!}z^{i-j}w^{d-i} =
\sum_{i-j=k=0}^d \frac{d!}{(d-k-j)!k!}z^{k}w^{d-j-k} 
$$

$\bullet$ The coefficient in \eqref{leq2} which comes with $a_j$, $j=0,1,\dots,d$ is equal to

$$\frac{d!}{(d-j)!}(z+w)^{d-j} 
=  \frac{d!}{(d-j)!}\sum_{k=0}^d  \binom{d-j}{k}z^k w^{d-j-k}
=
\sum_{i-j=k=0}^d \frac{d!}{(d-k-j)!}z^{k}w^{d-j-k} 
$$

Hence these coefficients are equal which proves the lemma.

\end{proof}

By the assumption $q_a$ has only real roots. 
Hence  $q_a(z+w)$ is a stable polynomial  in variables
$(z,w)$. Indeed,  if  $im (z) >0$ and $im (w) > 0$ then  $im (z+w) >0$, so $q_a(z+w) \ne 0$.
By  Lemma \ref{mainlemma}    we have $T_a((z+w)^d) = q_a(z+w)$. 
Applying Theorem \ref{BB4} we conclude that  the operator $T_a$ preserves stability, hence  $T_a$ restricted to 
$\R_d[z]$ preserves hyperbolicity. Thus we have proved that
$$p_a(z,1)  = p(z) +\sum_{k=1}^d a_kp^{(k)}(z)$$
is hyperbolic whenever $p\in \R_d[z]$ is hyperbolic.
Let us take $s\in \R^*$ and denote  $a(s): = (sa_1, \dots, s^k a_k, \dots, s^d a_d)$. 
Then the polynomial
$$q_{a(s)}(z):  = z^d +\sum_{k=1}^d s^ka_k(z^d)^{(k)} =  z^d +\sum_{k=1}^d s^ka_k \frac {n!}{(n-k)!} z^{d-k}$$
is again hyperbolic since  $q_{a(s)}(z) = s^{-d}q_{a}(sz)$.
Thus, by applying the above argument  to the sequence $a(s),$ we conclude  that
$p_a(z,s):  = p(z) +\sum_{k=1}^d a_ks^kp^{(k)}(z)$
is hyperbolic for all $s\in \R$ and any $p\in \R_d[z]$  hyperbolic. This ends the proof of Theorem A.

\subsection{Iterations of Nuij's sequences.}

Let $a=(a_1,\dots , a_d )\in \R^d$ and  $b= (b_1, \dots, b_d)\in \R^d$ be two Nuij sequences, we define their  composition
$b\circ a :=  c = (c_1, \dots, c_d)$  in the following way. For any polynomial $p(z) \in\R[z]$

$$p_c(z,s) =( p_a)_b (z,s) = 
 p_a(z,s) +\sum_{k=1}^d b_ks^k \frac{\partial^k p_a}{\partial z^k} 
  = p +\sum_{k=1}^d c_ks^kp^{(k)}.
$$

Note that with  the convention  $a_0 =b_0=1$ we have 
\begin{equation}\label{cwzor1}
c_k = \sum_{i=0}^k a_ib_{k-i}.
\end{equation}

Let   $a^1, \dots, a^r \in\R^d$. We define by   induction the composition of $r$ copies of sequences:

$$I_1(a^1) =a^1,\,\, I_r(a^1, \dots, a^r) : = I_{r-1}(a^1, \dots, a^{r-1})\circ a^r.$$

Explicitly,  if $ I_r(a^1, \dots, a^r)=c =(c_1, \dots, c_d)$, then
\begin{equation}\label{cwzor2}
c_k = \sum_{i_1<\cdots < i_r, i_1+\cdots +i_r=k}  a^1_{i_1}\cdots a^r_{i_r}.
\end{equation}

Let us consider the original Nuij sequences  of the form
\begin{equation}\label{nwzor1}
a^i =(x_i,0,\dots, 0) \in \R^d,
\end{equation}
where $x_i\in \R$, $ i=1,\dots, d$. Then, $ I_d(a^1, \dots, a^d)=c =(c_1, \dots, c_d)$  is the Nuij sequence obtained by the iteration of  $a^i$ and 
\begin{equation}\label{cwzor3}
c_k = \sum_{i_1<\cdots < i_k}  x_{i_1}\cdots x_{i_k},
\end{equation}
for $k=1,\dots, d$.
Thus  $c_k= c_k(x_1, \dots,x_d)$ is in fact the $k$-th elementary symmetric polynomial of  $x_1, \dots,x_d$.
Denote by   $c =(c_1, \dots, c_d):\R^d \to \R^d$  the Vi\`ete map and recall that   $\mathcal H_1^d =c(\R^n)$.
Thus we obtain that  $
\mathcal H_1^d\subset \mathcal N _d$.
For  $d\in \N$ let us denote by $b_d: \R^d \to \R^d$ the following linear map:
$$
b_d(a_1,\dots, a_k, \dots, a_d): =  (da_1, \dots,  \frac {d!}{(d-k)!} a_k, \dots, d! a_d).
$$

Theorem A and the above discussion can be summarized as follows.
\begin{corollary} For any $d\in \N$  we have 
$$
\mathcal H_1^d\subset \mathcal N _d = b_d^{-1} (\mathcal H_1^d).
$$
\end{corollary}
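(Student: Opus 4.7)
The corollary packages two statements: the equality $\mathcal N_d = b_d^{-1}(\mathcal H_1^d)$, which is a direct translation of Theorem A, and the inclusion $\mathcal H_1^d \subset \mathcal N_d$, which summarizes the iteration discussion. Both reduce to bookkeeping on top of material already proved, so the plan is short.

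For the equality, I would simply unwind the identifications. Under the convention $(b_1,\dots,b_d)\leftrightarrow z^d+\sum_k b_k z^{d-k}$ used throughout the section, the linear map $b_d$ sends $a=(a_1,\dots,a_d)$ to the coefficient vector of the polynomial
\begin{equation*}
z^d+\sum_{k=1}^d \frac{d!}{(d-k)!}\,a_k\,z^{d-k}=q_a(z).
\end{equation*}
Therefore $b_d(a)\in\mathcal H_1^d$ iff $q_a$ is hyperbolic, which by Theorem A is equivalent to $a\in\mathcal N_d$. This establishes the equality.

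For the inclusion $\mathcal H_1^d\subset\mathcal N_d$, I would start from $b=(b_1,\dots,b_d)\in\mathcal H_1^d$, factor $h_b(z)=\prod_{i=1}^d(z+x_i)$ with $x_i\in\R$, and note that $b_k=c_k(x_1,\dots,x_d)$ is the $k$-th elementary symmetric function. By formula \eqref{cwzor3} this identifies $b$ with the iterated composition $I_d(a^1,\dots,a^d)$ where $a^i=(x_i,0,\dots,0)$. By Nuij's Theorem \ref{NuijTheorem} each $a^i$ is itself a Nuij sequence, so the only remaining point is to check that $\mathcal N_d$ is closed under the operation $\circ$. This is immediate from the definition of composition: if $a,b\in\mathcal N_d$ and $p$ is hyperbolic, then for any fixed $s\in\R$ the polynomial $\tilde p(z):=p_a(z,s)$ is hyperbolic (since $a$ is Nuij), and applying the Nuij property of $b$ to $\tilde p$ with the same parameter $s$ yields that $p_c(z,s)=\tilde p_b(z,s)$ is hyperbolic as well.

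I do not see a real obstacle here; the only step requiring any care is the verification that composition of Nuij sequences is Nuij, and this is a one-line consequence of reapplying Definition \ref{NuijseqDef} to the intermediate polynomial $\tilde p$. Everything else is a direct rewrite of Theorem A in the language of the Vi\`ete identification and the rescaling $b_d$.
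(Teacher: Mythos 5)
Your proof is correct and follows essentially the same route as the paper: the equality $\mathcal N_d = b_d^{-1}(\mathcal H_1^d)$ is read off from Theorem A via the identification of $b_d(a)$ with the coefficient vector of $q_a$, and the inclusion $\mathcal H_1^d \subset \mathcal N_d$ comes from writing the elementary symmetric functions of the roots as the iterate $I_d(a^1,\dots,a^d)$ of the original Nuij sequences $a^i=(x_i,0,\dots,0)$ and using the Vi\`ete map. Your explicit one-line check that the composition of Nuij sequences is again a Nuij sequence (apply the $b$-perturbation to $\tilde p(z)=p_a(z,s)$ at the same parameter $s$) is exactly the point the paper leaves implicit, and it is a welcome clarification rather than a different argument.
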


\begin{exam} For $d=2$ we have
$
\mathcal H_1^2 = \{ a_1 ^2 -4a_2 \ge 0\} \subset \mathcal N _2 = \{ a_1 ^2 -2a_2 \ge 0\}.
$

\end{exam}

\section{ Universal determinantal representations }\label{detrep}


We shall consider $1$-parameter families of hyperbolic polynomials. 
A polynomial $$p(z,s)= z^d + a_1(s)z^{d-1} + \cdots+ a_d(s)$$
 will be called a { \it pencil of hyperbolic polynomials } 
if and only if,
\begin{itemize}
\item for each $s\in \R$ the polynomial $z\mapsto p(s,z)$ is hyperbolic,

\item each coefficient $a_i(s)\in \R[s]$ is of degree at most $i$.

\end{itemize}
For any $d\ge 1,$ we shall denote  by $\mathcal{PH}_d$ the space of such  pencils  of hyperbolic polynomials.

We say that a polynomial  $p(z,s)$ admits a determinantal  representation if there are  real symmetric matrices
$A_0, A_1$ such that 

$$p(z,s) =\det (zI+A_0+sA_1),
$$
and clearly  in this case   $p(z,s)$  is a  pencil of hyperbolic polynomials.

As an easy reformulation of  a remarkable theorem of  Helton and Vinnikov \cite{HV} reads

\begin{theorem}
\label{HV}
Any polynomial $p(z,s) \in {\mathcal{P}\mathcal  H}_d$ admits a determinantal  representation.

\end{theorem}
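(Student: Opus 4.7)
The plan is to reduce the statement to the classical Helton--Vinnikov theorem on ternary hyperbolic forms by homogenizing the pencil. Since $p(z,s) = z^d + \sum_{i=1}^d a_i(s)z^{d-i}$ with $\deg a_i(s) \le i$, every monomial appearing in $p$ has total degree at most $d$ in $(z,s)$. Introducing a new variable $w$, the formula
$$P(z,s,w) := w^d\, p(z/w,\, s/w)$$
therefore defines a genuine polynomial in $\R[z,s,w]$, homogeneous of degree $d$, with $P(z,s,1) = p(z,s)$ and $P(z,0,0) = z^d$.

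Next I would verify that $P$ is hyperbolic with respect to the direction $(1,0,0)$: for every $(s,w)\in\R^2$ the univariate polynomial $z\mapsto P(z,s,w)$ should have only real roots. When $w\ne 0$ this is immediate, since those roots are $w$ times the roots of the hyperbolic polynomial $z\mapsto p(z,s/w)$. The borderline case $w=0$ follows from continuity of roots: $z\mapsto P(z,s,0)$ is monic of degree exactly $d$, and its roots are limits of the real roots of $z\mapsto P(z,s,w)$ as $w\to 0$, hence themselves real.

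With $P$ now known to be a real homogeneous ternary hyperbolic form in the direction $(1,0,0)$ with $P(1,0,0)=1$, the Helton--Vinnikov theorem produces real symmetric $d\times d$ matrices $B_1,B_2,B_3$ with $B_1$ positive definite such that
$$P(z,s,w) = \det(zB_1 + sB_2 + wB_3).$$
Comparing the coefficients of $z^d$ gives $\det B_1 = 1$. Writing $B_1 = LL^T$ with $L$ real invertible, pulling $L$ and $L^T$ out of the determinant, and setting $w=1$ produces
$$p(z,s) = \det(zI + sA_1 + A_0)$$
with $A_0 := L^{-1}B_3L^{-T}$ and $A_1 := L^{-1}B_2L^{-T}$ real symmetric, which is the required determinantal representation.

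The only non-routine step is the hyperbolicity check on the hyperplane $w=0$, but this is dispatched by the standard continuity-of-roots argument above; the rest is formal homogenization, a single appeal to Helton--Vinnikov, and a congruence transformation that absorbs the positive definite factor $B_1$.
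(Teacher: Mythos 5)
Your proof is correct and follows essentially the same route as the paper: both exploit the degree bound $\deg a_i(s)\le i$ to convert the pencil into a single polynomial to which the Helton--Vinnikov theorem applies, and then read off the representation. The paper passes to the bivariate real-zero polynomial $f(x,y)=x^{d}p(x^{-1},x^{-1}y)$ and cites Theorem 2.2 of Helton--Vinnikov, whereas you homogenize to the ternary hyperbolic form $P(z,s,w)=w^{d}p(z/w,s/w)$ and use the equivalent Lax-conjecture formulation followed by the congruence normalization of $B_1$; your explicit continuity-of-roots check of hyperbolicity on $w=0$ is a detail the paper leaves implicit.
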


Indeed let us  set  $z= x^{-1}$ and $s= x^{-1}y$  and finally
$$
f(x,y) := x^{d} p(z,s)=x^{d} p(x^{-1},x^{-1}y).
$$
Then $f$ is a real zero polynomial in the sense of  Helton-Vinnikov, so it has a determinantal  representation
according to Theorem 2.2 in \cite{HV}. In fact, as noticed by Lewis, Parrilo and  Ramana \cite{LPR}, Theorem \ref{HV} is a positive answer to the nonhomogeneous version of the Lax conjecture \cite{lax}.
%
%
%

We want to characterize all Nuij sequences
 $ a= (a_1, \dots, a_d) \in \R^d$ 
such that for 
any $p\in \R[z]$,  hyperbolic polynomial of degree $d$, the associated  pencil of hyperbolic  polynomials
$$
p_a(z,s):  = p +\sum_{k=1}^d a_ks^kp^{(k)} \in \R[z]
,$$ admits a {\it universal   determinantal representation}; by this we mean that
 there exists   a symmetric matrix  $A_a$ such that
$$
p_a(z,s) =\det (zI+D+sA_a),
$$
where $D$ is  a diagonal matrix. 
In other words $-D$ has on the diagonal all the roots of $p$ written in an arbitrary order. 
The matrix $A_a$ will be referred    as a {\it matrix  associated to  the sequence} $ a= (a_1, \dots, a_d)$.
We denote by $\mathcal {UN}_d$ the set of all Nuij sequences in $\R^d$ which admit   universal   determinantal representations.

%
%

\subsection{Special  Toeplitz matrices }\label{toeplitzmatrices}
Recall that  a square matrix is called {\it Toeplitz matrix } if all  parallels to the principal diagonal are constant.
We say that a  symmetric Toeplitz matrix is {\it  special} if all entries outside the principal diagonal  are equal to some $\beta \in \R$, and of course
all entries on the principal diagonal are equal to some $\alpha \in\R$. We will denote such a matrix by $T_{\alpha,\beta}$.

In the  next proposition we  will show that special Toeplitz matrices give all  Nuij sequences which admit   universal   determinantal representations. 
\begin{proposition}\label{DetNuijToeplProp} Let 
 $ a= (a_1, \dots, a_d) \in\mathcal {UN}_d.$ Then, there exists a   special Toeplitz matrix   $T_{\alpha,\beta}$
 which is associated to the sequence $a$. The constant $\alpha$ is unique. For $d=2$  we have two choices $\beta$ or $-\beta$. If $d\ge 3,$ then $\beta$ is uniquely  determined.
 
%

\end{proposition}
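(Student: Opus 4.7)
My plan is to expand both sides of the universal relation $p_a(z,s)=\det(zI+D+sA_a)$ as polynomials in $s$, $z$, and the independent indeterminates $D_{11},\ldots,D_{dd}$, and then read off the structure of $A_a$ by matching coefficients. Multilinearity of the determinant in columns, combined with the fact that $zI+D$ is diagonal, gives
\[
\det(zI+D+sA_a) = \sum_{k=0}^{d} s^k \sum_{|S|=k} \det\bigl((A_a)_{S,S}\bigr)\prod_{i\notin S}(z+D_{ii}),
\]
where $(A_a)_{S,S}$ is the principal submatrix on indices in $S$. On the other side, with $p(z)=\prod_i(z+D_{ii})$, expanding $p(z+w)=\prod_i(z+w+D_{ii})$ termwise shows that $p^{(k)}(z)/k!=\sum_{|S|=k}\prod_{i\notin S}(z+D_{ii})$. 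Since the monomial $\prod_{i\in S^c}D_{ii}$ appears only in the product corresponding to $S$, the polynomials $\prod_{i\notin S}(z+D_{ii})$ are linearly independent for distinct $S$ of size $k$, and matching the coefficient of $s^k$ forces
\[
\det\bigl((A_a)_{S,S}\bigr) = k!\, a_k \qquad \text{for every } S\subseteq \{1,\ldots,d\},\ |S|=k.
\]

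I will then extract the Toeplitz structure from these minor conditions. The case $k=1$ gives $(A_a)_{ii}=a_1=:\alpha$ for all $i$, and $k=2$ gives $(A_a)_{ij}^2=\alpha^2-2a_2=:\beta^2$, so $|(A_a)_{ij}|=\beta$ is constant off the diagonal. For $d=2$, $A_a$ is already $T_{\alpha,\pm\beta}$; both signs give the same representation, being conjugate via $\operatorname{diag}(1,-1)$, which accounts for the two choices in the statement. For $d\geq 3$, the $k=3$ minor condition yields
\[
(A_a)_{ij}(A_a)_{ik}(A_a)_{jk} = \tfrac{1}{2}\bigl(6a_3-\alpha^3+3\alpha\beta^2\bigr) =: \gamma,
\]
independent of the triple. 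Writing $\sigma_{ij}:=(A_a)_{ij}/\beta\in\{\pm 1\}$ (the case $\beta=0$ is trivial, $A_a$ being scalar), this becomes $\sigma_{ij}\sigma_{ik}\sigma_{jk}=\epsilon_0\in\{\pm 1\}$ with $\epsilon_0:=\gamma/\beta^3$, and specializing at the third index $1$ yields $\sigma_{ij}=\epsilon_0\sigma_{1i}\sigma_{1j}$ for $i,j>1$.

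For the last step I exploit a gauge freedom: for any signed diagonal $\varepsilon=\operatorname{diag}(\varepsilon_i)$ with $\varepsilon_i\in\{\pm 1\}$, one has $\varepsilon D\varepsilon=D$, so $\det(zI+D+s\varepsilon A_a\varepsilon)=\det(zI+D+sA_a)$ and $\varepsilon A_a\varepsilon$ is another matrix associated to $a$. Choosing $\varepsilon_1=1$, $\varepsilon_i=\sigma_{1i}$ for $i\geq 2$ and applying the identity $\sigma_{ij}=\epsilon_0\sigma_{1i}\sigma_{1j}$ shows that the normalized matrix has $\beta$ on the first row and column off the diagonal and $\epsilon_0\beta$ on all remaining off-diagonal entries. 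Thus $\varepsilon A_a\varepsilon=T_{\alpha,\beta}$ if $\epsilon_0=1$, and a further conjugation by $\operatorname{diag}(-1,1,\ldots,1)$ yields $T_{\alpha,-\beta}$ if $\epsilon_0=-1$. Uniqueness of $\alpha$ is immediate; for $d\geq 3$, the $k=2$ condition determines $\beta^2$, and then the $k=3$ condition $\det T_{\alpha,\beta}(3)=(\alpha-\beta)^2(\alpha+2\beta)=6a_3$ selects the sign of $\beta$ uniquely. I expect the main obstacle to be precisely this signed-diagonal normalization: verifying that the triple-product constraint $\sigma_{ij}\sigma_{ik}\sigma_{jk}=\epsilon_0$ is exactly what allows a single gauge transformation $\varepsilon$ to turn the off-diagonal sign pattern into the uniform sign of a special Toeplitz matrix.
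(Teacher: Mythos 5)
Your proposal is correct and follows essentially the same route as the paper: matching coefficients of the universal identity to force all $k\times k$ principal minors of $A_a$ to be equal (indeed equal to $k!\,a_k$), reading off the constant diagonal $\alpha$ and constant $|a_{ij}|=|\beta|$ from the $1\times1$ and $2\times2$ minors, extracting the triple-product sign constraint from the $3\times3$ minors, and then normalizing the sign pattern, with uniqueness of $\beta$ for $d\ge 3$ coming from $t_{\alpha,\beta}(3)=6a_3$. Your signed-diagonal gauge conjugation $\varepsilon A_a\varepsilon$ is exactly the paper's row/column multiplications by $\epsilon_{1i}$, only phrased more cleanly (and uniformly in $d$, with the $\epsilon_0=-1$ case handled explicitly as $T_{\alpha,-\beta}$), so no genuinely different ideas are involved.
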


\begin{proof} 
Let us  fix  a sequence  $ a= (a_1, \dots, a_d) \in\mathcal {UN}_d,$ and let 
 $A_a$  be a   symmetric matrix associated to $a$. It means that
  for any    hyperbolic polynomial $p\in \R[z]$  we have 
  \begin{equation}\label{NuijDef1}
p_a(z,s) =\det (zI+D+sA_a),
\end{equation}
where $D$ is a diagonal matrix with  characteristic   polynomial equal  to $p$. We will find   a special Toeplitz matrix
$T_{\alpha,\beta}$ such that
$$
p_a(z,s) =\det (zI+D+sT_{\alpha,\beta}).
$$

As a piece of convention, we recall  that  a $j\times j$ minor of $ A_a$   is {\it principal} if it is  the  determinant  of a matrix obtained  from $A_a$ by deleting  rows and columns containing $d-j$ elements from the principal diagonal.
With  the assumption of Proposition \ref{DetNuijToeplProp}  we have. 

\begin{lemma}\label{eqminorslem1}
 For any $j=1, \dots, d,$ all $j\times j$ principal minors of $ A_a$  are equal.
\end{lemma}

Let $-\lambda_1, \dots, -\lambda_d$ be the roots of $p$. Since  $p$  can be chosen arbitrarily,
  we may consider  both sides of the identity \ref{NuijDef1} as polynomials  with real coefficients in variables
$w_i:=z+\lambda_i$, $i=1, \dots, d$.   Since $\R$ is a field of characteristic $0$,  the coefficients   corresponding to the  monomials in $w_{i_1} \cdots w_{i_j}$, where  $i_1 < \dots < i_j$,
on right-hand and left-hand side are equal. It is enough to expand both sides to check the statement of the lemma.
In particular the $1\times1$ minors, which are actually the entries on the principal  diagonal,  are all equal to some 
$\alpha \in \R$.

\begin{lemma}\label{eqminorslem2} Let  $A_a =(a_{ij})$. Then there exists $\beta \in\R$ such that 
for any distinct $i,j$ we have    $a_{ij}^2 = \beta^2$. 
 
\end{lemma}

Indeed  to each entry $a_{ij}$, $i\not =  j$  we can associate the  $2\times 2$  principal  minor
$$\det \begin{pmatrix}
&\alpha & \; &a_{ij}& \cr
&a_{ij} &\; &\alpha & 
\end{pmatrix}  =\alpha^2 -a_{ij}^2$$

Hence by Lemma \ref{eqminorslem1}  all $a_{ij}^2$ are equal for $i\not =  j$. 
We put $\beta ^2=  a_{ij}^2$. Clearly the statement of Proposition \ref{DetNuijToeplProp} is trivial for  $\beta=0$, so in the sequel we assume that $\beta\ne0$.

Before analyzing the case of $j\times j$ principal  minors, where $j\ge 3$, we need an explicit formula for the determinant of 
a special Toeplitz matrix
$T_{\alpha,\beta}$.

\begin{lemma}\label{eqminorslem3} If   $T_{\alpha,\beta}$ is a special Toeplitz matrix of size $d\times d$, then
\begin{equation}\label{sptoedet}
 t_{\alpha,\beta}(d):=\det T_{\alpha,\beta}= (\alpha -\beta)^{d-1} (\alpha+(d-1)\beta)
. \end{equation}
 \end{lemma}
 
 Next we  consider the $3\times3$ principal minors of the matrix $A_a$. We know by Lemma \ref{eqminorslem2},
  that 
for any $i\ne j$ we have $a_{ij} = \epsilon_{ij}|\beta|$, where $\epsilon_{ij}\in\{-1,1\}$. We will show that the sign of
 $\epsilon_{ij}$ can be uniformly chosen, which  means that either $\epsilon_{ij}= 1$ for all $i\ne j$,  or 
 $\epsilon_{ij}= -1$ for all $i\ne j$.
   Le us write this minor in the form 
   
  \begin{equation}\label{ND2}
\det \begin{pmatrix}
&\alpha & \; &\epsilon_{ij}|\beta|& \; &\epsilon_{ik}|\beta|& \cr
 &\epsilon_{ij}|\beta|& \; &\alpha & \;  &\epsilon_{jk}|\beta|& \cr
&\epsilon_{ik}|\beta|& \;&\epsilon_{jk}|\beta|& \; &\alpha & 
\end{pmatrix}  = \alpha^3+ 2 \epsilon_{ij}\epsilon_{ik}\epsilon_{jk}\beta^2|\beta| - 3\alpha\beta^2.
\end{equation}

By Lemma \ref{eqminorslem1} all these minors are equal, so there exists $\xi \in \{-1,1\}$ such that for all 
choices $1\le i <j<k\le d$ we have
\begin{equation}\label{signeq}
\epsilon_{ij}\epsilon_{ik}\epsilon_{jk} =\xi.
\end{equation}
This shows that   we may chose  $\epsilon_{ij}= \xi$ for all $i\ne j$.

Assume now that $d\ge 4$.
We have to show that  if  we  put  $\epsilon_{ij}=\xi$ for any $i\neq j$,  then  actually  all principal minors
$j\times j$, $j\ge 4$ are equal to the value of a   principal minor
$j\times j$, $j\ge 4$ for the original matrix $A_a$, so in fact they are determined just  by $\xi$. Note that it is enough
to  consider the case $\alpha =0$ and $\beta = 1$. First we consider the case $d=4$,  so 

$$A_a =\begin{pmatrix}
&0 & \; &\epsilon_{12}& \; &\epsilon_{13}&\; &\epsilon_{14}& \cr
&\epsilon_{12}&\; &0& \;  &\epsilon_{23}& \;&\epsilon_{24}& \cr
&\epsilon_{13}&\;  &\epsilon_{23}& \; &0&  \; &\epsilon_{34}& \cr
&\epsilon_{14}& \;&\epsilon_{24}&  \; &\epsilon_{34}& \; &0& 
\end{pmatrix}.
$$
For  each $i\ge 2$ we multiply the $i$th row of $A_a$ by  $\epsilon_{1i}$ and use  relation \ref{signeq}. Thus we obtain the matrix
$$B_a :=\begin{pmatrix}
&0 & \; &\epsilon_{12}& \; &\epsilon_{13}&\; &\epsilon_{14}& \cr
&1&\; &0& \;  &\xi \epsilon_{13}& \; &\xi \epsilon_{14}& \cr
&1& \;&\xi \epsilon_{12}& \; &0&  \;&\xi \epsilon_{14}& \cr
&1& \; &\xi \epsilon_{12}&  \;&\xi \epsilon_{13}& \; &0&
\end{pmatrix}.
$$

For  each $j\ge 2$ we multiply the $j$th column  of $B_a$ by  $\epsilon_{1j}$ and use  the fact that  $\epsilon_{1i}^2=1$. So  we obtain the matrix
$$C_a :=\begin{pmatrix}
&0 & \; &1& \; &1&\; &1& \cr
&1&\; &0& \;  &\xi & \; &\xi & \cr
&1& \;&\xi & \; &0&  \;&\xi & \cr
&1& \; &\xi&  \;&\xi & \; &0&
\end{pmatrix}.
$$
Multiplying the first row and the first  column of $C_a$ by $\xi$, we can see that  
$$\det C_a =\xi^2 \det T_{0,1}= t_{0,1}(4) = -3.$$
 But on the other hand 
$\det C_a  =( \epsilon_{12} \epsilon_{13} \epsilon_{14})^2 \det A_a =\det A_a$.
Accordingly we may assume that $ A_a= T_{0,\xi}$. The same argument applies for any
$d>4$.
Hence  the existence in Proposition  \ref{DetNuijToeplProp}  follows. 
To  proof the uniqueness, 
 note that  $\alpha$ and $\beta^2$ are uniquely determined. Clearly the equation $ a_3= \frac{1}{3!} (\alpha-\beta)^2(\alpha+2\beta)$ uniquely determines  $\beta$.
\end{proof}

As a consequence  we obtain the following characterization of   Nuij sequences
which admit  universal   determinantal representations.

\begin{mainB}\label{Nuijseqthm)}
A sequence 
 $ a= (a_1, \dots, a_d) \in \R^d$ is  a 
 Nuij sequence with a  universal   determinantal representation if and only if 
 there exits $\alpha,\beta \in \R$ such that
 \begin{equation}\label{Nuijseqeq}
a_i = \frac{1}{i!}t_{\alpha,\beta}(i), \, i=1, \dots, d.
\end{equation}

\end{mainB}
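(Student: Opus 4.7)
The plan is to derive Theorem B as a direct consequence of Proposition \ref{DetNuijToeplProp} together with an explicit computation of $\det(zI + D + sT_{\alpha,\beta})$.

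For the necessity direction, I would fix $a \in \mathcal{UN}_d$ and invoke Proposition \ref{DetNuijToeplProp} to obtain $\alpha, \beta \in \R$ such that $p_a(z,s) = \det(zI + D + sT_{\alpha,\beta})$ for every hyperbolic $p$ (with $D$ the corresponding diagonal matrix). Specializing to $p(z) = z^d$, i.e.\ $D = 0$, gives
\begin{equation*}
z^d + \sum_{k=1}^d a_k \frac{d!}{(d-k)!} s^k z^{d-k} \;=\; \det(zI + sT_{\alpha,\beta}) \;=\; \sum_{k=0}^d s^k z^{d-k} \sigma_k(T_{\alpha,\beta}),
\end{equation*}
where $\sigma_k$ denotes the sum of all $k\times k$ principal minors. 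Since every $k\times k$ principal minor of a special Toeplitz matrix is itself $T_{\alpha,\beta}$ of size $k$, we get $\sigma_k(T_{\alpha,\beta}) = \binom{d}{k}\, t_{\alpha,\beta}(k)$. Matching coefficients of $s^k z^{d-k}$ yields $a_k = t_{\alpha,\beta}(k)/k!$.

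For the sufficiency direction, suppose $a_k = t_{\alpha,\beta}(k)/k!$ and let $p(z) = \prod_{i=1}^d (z+\lambda_i)$ be any hyperbolic polynomial with associated $D = \mathrm{diag}(\lambda_1,\dots,\lambda_d)$. Writing $T_{\alpha,\beta} = (\alpha-\beta)I + \beta J$, where $J$ is the all-ones matrix, we get $zI + D + sT_{\alpha,\beta} = W' + s\beta J$ with $W' = \mathrm{diag}(w_1,\dots,w_d)$ and $w_i = z + \lambda_i + s(\alpha-\beta)$. Since $J$ has rank one, the matrix determinant lemma gives
\begin{equation*}
\det(W' + s\beta J) \;=\; \prod_{i=1}^d w_i \;+\; s\beta \sum_{i=1}^d \prod_{j\ne i} w_j.
\end{equation*}
The key observation is that $\prod_i w_i = p(z + s(\alpha-\beta))$ and $\sum_i \prod_{j\ne i} w_j = \frac{d}{dz}\prod_i w_i = p'(z + s(\alpha-\beta))$, since each $w_i$ is linear in $z$ with unit coefficient. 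Applying Taylor's formula to both terms and collecting the coefficient of $s^k p^{(k)}(z)$, I would find
\begin{equation*}
\frac{(\alpha-\beta)^k}{k!} + \beta\,\frac{(\alpha-\beta)^{k-1}}{(k-1)!} \;=\; \frac{(\alpha-\beta)^{k-1}\bigl(\alpha + (k-1)\beta\bigr)}{k!} \;=\; \frac{t_{\alpha,\beta}(k)}{k!} \;=\; a_k,
\end{equation*}
so $\det(zI + D + sT_{\alpha,\beta}) = p_a(z,s)$. This both exhibits the determinantal representation and shows (via symmetry of $zI + D + sT_{\alpha,\beta}$) that $p_a(z,s)$ is hyperbolic for every $s \in \R$, hence $a \in \mathcal{UN}_d$.

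No serious obstacle is anticipated: the forward direction is pure coefficient-matching once Proposition \ref{DetNuijToeplProp} is in hand, and the converse is a short Taylor-expansion identity. The only step that requires care is the bookkeeping in the final coefficient comparison, where the factor $\alpha + (k-1)\beta$ emerging from $(\alpha-\beta) + k\beta$ must be correctly identified with the second factor in the formula $t_{\alpha,\beta}(k) = (\alpha-\beta)^{k-1}(\alpha + (k-1)\beta)$ from Lemma \ref{eqminorslem3}.
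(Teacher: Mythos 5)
Your proof is correct, and its skeleton is the same as the paper's: necessity by invoking Proposition \ref{DetNuijToeplProp} to replace the associated matrix by a special Toeplitz matrix $T_{\alpha,\beta}$, sufficiency by exhibiting $T_{\alpha,\beta}$ as the universal representing matrix. The difference is in how the central identity $\det(zI+D+sT_{\alpha,\beta}) = p(z) + \sum_{k} \frac{1}{k!}t_{\alpha,\beta}(k)\,s^k p^{(k)}(z)$ is handled: the paper simply asserts it (implicitly it follows from the expansion of the determinant into principal minors of $sT_{\alpha,\beta}$ weighted by products of the $w_i = z+\lambda_i$, each $k\times k$ principal minor being $t_{\alpha,\beta}(k)$), whereas you actually prove it, and by a different decomposition: writing $T_{\alpha,\beta} = (\alpha-\beta)I + \beta J$ and applying the rank-one determinant lemma to get the closed form $p\bigl(z+s(\alpha-\beta)\bigr) + s\beta\, p'\bigl(z+s(\alpha-\beta)\bigr)$, then Taylor-expanding; for necessity you extract the coefficients by specializing to $p=z^d$, $D=0$, and counting principal minors. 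Your route has the advantage of being self-contained where the paper is terse, and the closed form makes the link with Nuij's original perturbation $p+sp'$ (shifted by $s(\alpha-\beta)$ and scaled by $\beta$) completely transparent; the paper's implicit principal-minor expansion is the more direct bookkeeping but is never written out. All the individual steps you use (the adjugate form of the matrix determinant lemma valid without invertibility, $\sigma_k(T_{\alpha,\beta}(d)) = \binom{d}{k}t_{\alpha,\beta}(k)$, and the final identity $(\alpha-\beta)+k\beta = \alpha+(k-1)\beta$) check out, and your closing remark that symmetry of $zI+D+sT_{\alpha,\beta}$ yields hyperbolicity for all $s$, so that $a\in\mathcal{N}_d$ and indeed $a\in\mathcal{UN}_d$, correctly completes the sufficiency direction.
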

\begin{proof}  If $T_{\alpha,\beta}$ is a special Toeplitz matrix,  then for any  hyperbolic polynomial
$p(z)= (z+\lambda_1)\dots (z+\lambda_d)$ we have 
a pencil of   polynomials
$$
p_a(z,s):  = p +\sum_{k=1}^d a_ks^kp^{(k)}(z)= \det (zI+D+sT_{\alpha,\beta})
$$
where  $a_i = \frac{1}{i!}t_{\alpha,\beta}(i),$ and $D$ is a diagonal matrix with entries
$\lambda_1,\dots ,\lambda_d$.  So the  sequence 
 $ a= (a_1, \dots, a_d)$ is  a 
 Nuij sequence with a  universal   determinantal representation. Conversely, if 
 $ a= (a_1, \dots, a_d) \in \R^d$ is  a 
 Nuij sequence with a  universal   determinantal representation, then by Proposition \ref{DetNuijToeplProp} 
 the associated matrix can be chosen as a  special Toeplitz matrix $T_{\alpha,\beta}$. Hence 
 $a_i = \frac{1}{i!}t_{\alpha,\beta}(i)$.
\end{proof}

\begin{exam}
Note that the original   Nuij sequence  $a=(1,0, \dots , 0)$ has a universal   determinantal representation.
Indeed,  $T_{1,1}$, which has all entries equal to $1$, is  the matrix  associated to  this sequence. Note, that this also proves Nuij's Theorem \ref{NuijTheorem}.

\end{exam}

\begin{rema}
A composition of  the original Nuij sequence  $a=(1,0, \dots , 0)$ with itself gives  a Nuij sequence
$b=(2,1,0, \dots , 0)$  which has no universal determinantal  representation for $d\ge 3$.
Indeed,  if  there  exist  $\alpha,\beta \in \R$ such that $b_i = \frac{1}{i!}t_{\alpha,\beta}(i)$, $i=1,2,3$, then
$\alpha =2$ and $\alpha^2 -\beta^2 = 2$. Hence $\beta =\pm\sqrt{2}$. But, then
$6b_3 =\alpha^3+ 2\beta^3 - 3\alpha\beta^2 \ne 0$,  so $b_3\ne 0,$ which is a contradiction.
\end{rema}

\end{document}